\newcommand\cF{{\mathcal F}}
\newcommand\cG{{\mathcal G}}
\newcommand\cH{{\mathcal H}}
\theoremstyle{plain}
\newtheorem{theorem}{Theorem}[section]
\newtheorem{proposition}[theorem]{Proposition}
\theoremstyle{definition}
\newtheorem{claim}[theorem]{Claim}
\newcommand\cref[1]{Corollary~\ref{cor:#1}}
\title{On saturation of Berge hypergraphs}
\author{D\'aniel Gerbner$^1$ \ \ Bal\'azs Patk\'os$^{1,2}$ \ \ Zsolt Tuza$^{1,3}$ \ \ M\'at\'e Vizer$^1$ \\ \small $^1$ Alfr\'ed R\'enyi Institute of Mathematics\\ \small $^2$ Moscow Institute of Physics and Technology\\ \small $^3$ Department of Computer Science and Systems
 Technology, University of Pannonia
}
\date{}
\begin{document}

\maketitle

\begin{abstract}
    A hypergraph $H=(V(H), E(H))$ is a \emph{Berge copy of a graph $F$}, if $V(F)\subset V(H)$ and there is a bijection $f:E(F)\rightarrow E(H)$ such that for any $e\in E(F)$ we have $e\subset f(e)$. A hypergraph is Berge-$F$-free if it does not contain any Berge copies of $F$. We address the saturation problem concerning Berge-$F$-free hypergraphs, i.e., what is the minimum number $sat_r(n,F)$ of hyperedges in an $r$-uniform Berge-$F$-free hypergraph $H$ with the property that adding any new hyperedge to $H$ creates a Berge copy of $F$. We prove that $sat_r(n,F)$ grows linearly in $n$ if $F$ is either complete multipartite or it possesses the following property: if $d_1\le d_2\le \dots \le d_{|V(F)|}$ is the degree sequence of $F$, then $F$ contains two adjacent vertices $u,v$ with $d_F(u)=d_1$, $d_F(v)=d_2$. In particular, the Berge-saturation number of regular graphs grows linearly in $n$. 
\end{abstract}
\section{Introduction}

Given a family $\cF$ of (hyper)graphs, we say that a (hyper)graph $G$ is $\cF$-free if $G$ does not contain any member of $\cF$ as a subhypergraph. The obvious question is how large an $\cF$-free (hyper)graph can be, i.e.\ what is the maximum number $ex(n,\cF)$ of (hyper)edges in an $\cF$-free $n$-vertex (hyper)graph is called the extremal/Tur\'an problem. A natural counterpart to this well-studied problem is the so-called \textit{saturation problem}. We say that $G$ is \textit{$\cF$-saturated} if $G$ is $\cF$-free, but adding any (hyper)edge to $G$ creates a member of $\cF$. The question is how small an $\cF$-saturated (hyper)graph can be, i.e.\ what is the minimum number $sat(n,\cF)$ of (hyper)edges in an $\cF$-saturated $n$-vertex (hyper)graph. 

In the graph case, the study of saturation number was initiated by Erd\H os, Hajnal, and Moon \cite{ehm}.
Their theorem on complete graphs was generalized to complete uniform hypergraphs by Bollob\'as \cite{B65}.
K\'aszonyi and Tuza \cite{kt} showed that for any family $\cF$ of graphs, we have $sat(n,\cF)=O(n)$. For hypergraphs, Pikhurko \cite{P2004} proved the analogous result that for any family $\cF$ of $r$-uniform hypergraphs, he proved that we have $sat(n,\cF)=O(n^{r-1})$.
For some further types of saturation (``strongly $F$-saturated'' and ``weakly $F$-saturated'' hypergraphs) the exact exponent of $n$ was determined in \cite{t92} for every forbidden hypergraph $F$.

In this paper, we consider some special families of hypergraphs.
We say that a hypergraph $H$ is a \emph{Berge copy of a graph $F$} (in short: $H$ is a Berge-$F$) if $V(F)\subset V(H)$ and there is a bijection $f:E(F)\rightarrow E(H)$ such that for any $e\in E(F)$ we have $e\subset f(e)$. We say that $F$ is a \textit{core graph} of $H$. Note that there might be multiple core graphs of a Berge-$F$ hypergraph and $F$ might be the core graph of multiple Berge-$F$ hypergraphs.

Berge hypergraphs were introduced by Gerbner and Palmer \cite{gp1}, extending the notion of hypergraph cycles in Berge's definition \cite{Be87}. They studied the largest number of hyperedges in Berge-$F$-free hypergraphs (and also the largest total size, i.e.\ the sum of the sizes of the hyperedges). English, Graber, Kirkpatrick, Methuku and Sullivan \cite{EGKMS2017} considered the saturation problem for Berge hypergraphs. They conjectured that  $sat_r(n,\text{Berge-}F)=O(n)$ holds for any $r$ and $F$, and proved it for several classes of graphs. Here and throughout the paper the parameter $r$ in the index denotes that we consider $r$-uniform hypergraphs, and we will denote $sat_r(n,\text{Berge-}F)$ by $sat_r(n,F)$ for brevity. The conjecture was proved for $3\le r\le 5$ and any $F$ in \cite{EGMT2019}. In this paper we gather some further results that support the conjecture.

English, Gerbner, Methuku and Tait \cite{EGMT2019} extended this conjecture to hypergraph-based Berge hypergraphs. Analogously to the graph-based case, we say that a hypergraph $H$ is a Berge copy of a hypergraph $F$ (in short: $H$ is a Berge-$F$) if $V(F)\subset V(H)$ and there is a bijection $f:E(F)\rightarrow E(H)$ such that for any $e\in E(F)$ we have $e\subset f(e)$. We say that $F$ is a core hypergraph of $H$. The conjecture in this case states that if $F$ is a $u$-uniform hypergraph, then $sat_r(n,F)=O(n^{u-1})$.

For a hypergraph $H=(V(H),E(H))$ and a family of hypergraphs $\cF$ we say that $H$ is \emph{$\cF$-oversaturated} if for any hyperedge $h \subset V(H)$ that is not in $H$, there is a copy of a hypergraph $F\in\cF$ that consists of $h$ and $|E(F)|-1$ hyperedges in $E(H)$.
Let $osat_r(n,\cF)$ denote the smallest number of hyperedges in an $\cF$-oversaturated $r$-uniform hypergraph on $n$ vertices.

\begin{proposition}\label{oversa} For any\/ $u$-uniform hypergraph\/ $F$ and any\/ $r>u$, we have\/
$osat_r(n,F)=O(n^{u-1})$. Moreover, there is an\/ $r$-uniform hypergraph\/ $H$ with\/ $O(n^{u-1})$ hyperedges such that adding any hyperedge to\/ $H$ creates a Berge-$F$ such that its core hypergraph\/ $F_0$ (which is a copy of\/ $F$) is not a core hypergraph of any Berge-$F$ in\/ $H$.
\end{proposition}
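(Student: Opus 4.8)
The plan is to prove the ``moreover'' assertion, which in particular yields $osat_r(n,F)=O(n^{u-1})$, since the hypergraph $H$ we build will be $F$-oversaturated. Write $E(F)=\{e_1,\dots,e_m\}$ and $V(F)=\{x_1,\dots,x_t\}$, and relabel the vertices so that $e_m=\{x_1,\dots,x_u\}$; we may assume $m\ge 1$, and since $F$ has no repeated edges we have $k_j:=|e_j\cap e_m|\le u-1$ for every $j<m$. The idea is to partition $V(H)=[n]$ (with $n$ large) into a huge ``free'' set $M$ and a set $Z=A\cup D$ of constant size (depending only on $F$ and $r$), where $A=\{a_{u+1},\dots,a_t\}$ will always host the vertices $x_{u+1},\dots,x_t$ of $F$, and $D$, of sufficiently large constant size, only supplies ``filler'' vertices. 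The governing principle is this: in the Berge-$F$ we create, the edge $e_m$ of its core will be forced to lie entirely inside $M$, while $H$ is arranged so that \emph{no single hyperedge of $H$ contains a $u$-element subset of $M$}. Consequently the core we produce can never be re-realized using only the hyperedges of $H$, which is exactly what the ``moreover'' part demands.

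Concretely, let $H$ consist of the following hyperedges. (i) For each $j\in\{1,\dots,m-1\}$ and each $k_j$-element subset $P\subseteq M$, the hyperedge $g_{j,P}:=P\cup S_j\cup F_j$, where $S_j:=\{a_s:x_s\in e_j\setminus e_m\}\subseteq A$ (note $|S_j|=u-k_j\ge 1$) and $F_1,\dots,F_{m-1}\subseteq D$ are fixed, pairwise distinct $(r-u)$-element sets. (ii) Every $r$-element set $h$ with $|h\cap M|\le u-1$. Part (i) contributes $\sum_{j<m}\binom{|M|}{k_j}=O(n^{u-1})$ hyperedges, and part (ii) contributes $\sum_{i=0}^{u-1}\binom{|M|}{i}\binom{|Z|}{r-i}=O(n^{u-1})$ as well, so $|E(H)|=O(n^{u-1})$. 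Moreover $g_{j,P}\cap M=P$ has size $k_j\le u-1$, and the hyperedges in (ii) meet $M$ in fewer than $u$ vertices; hence no hyperedge of $H$ contains a $u$-element subset of $M$, as promised.

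Now take any $r$-element set $h\notin E(H)$. By (ii) we have $|h\cap M|\ge u$, so choose $Q=\{q_1,\dots,q_u\}\subseteq h\cap M$ and define $\phi\colon V(F)\to[n]$ by $\phi(x_i)=q_i$ for $i\le u$ and $\phi(x_s)=a_s$ for $s>u$; this is injective since $M\cap A=\emptyset$. Then $\phi(e_m)=Q\subseteq h$, and for $j<m$ we get $\phi(e_j)=P_j\cup S_j\subseteq g_{j,P_j}\in E(H)$, where $P_j:=\{q_i:x_i\in e_j\cap e_m\}$ is a $k_j$-element subset of $M$. The hyperedges $g_{1,P_1},\dots,g_{m-1,P_{m-1}}$ are pairwise distinct (they have distinct intersections $F_j$ with $D$) and none equals $h$, so together with $h$ they form a Berge-$F$ in $H+h$ whose core $F_0$ is the copy of $F$ on $\phi(V(F))$ with edge set $\{\phi(e_1),\dots,\phi(e_m)\}$. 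This $F_0$ is not the core of any Berge-$F$ contained in $H$: such a Berge copy would need a hyperedge of $H$ containing $\phi(e_m)=Q$, a $u$-element subset of $M$, and there is none. The one point requiring care — and the reason for ingredient (ii) — is the ``bad'' hyperedges $h$ with $|h\cap M|<u$: for such $h$ the edge $\phi(e_m)$ cannot be pushed into $M$, and the planted copy of $F$ might well be re-realizable inside $H$; since there are only $O(n^{u-1})$ such $h$ (they have at least $r-u+1$ of their vertices in the constant-size set $Z$), we simply place all of them into $H$ from the outset, so that they are never added. The remaining details — choosing $|D|$ large enough to pick distinct $F_j$, ensuring the required distinctness of the chosen $m$ hyperedges, and disposing of degenerate cases ($F$ edgeless, or with repeated edges if these are allowed at all) — are routine.
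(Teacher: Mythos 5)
Your construction is correct and proves the ``moreover'' statement (hence also the $osat$ bound), but it takes a genuinely different route from the paper. The paper imports Pikhurko's theorem to obtain a $u$-uniform $F$-saturated hypergraph $G$ on a large subset $L$ of the vertex set, and defines $H$ essentially as $\{g\cup R : g\in E(G)\}$ for a fixed $(r-u)$-set $R$; a missing $r$-set $h$ is handled by finding a $u$-subset $e\subseteq h\cap L$ that is not in $G$, letting the copy of $F$ created by $e$ in $G$ serve as the core, and observing that $e$ is in no hyperedge of $H$. You instead build $H$ completely by hand: you designate constant-size ``anchor'' sets $A$ (hosting $V(F)\setminus e_m$) and $D$ (filler, to keep hyperedges distinct), put in one family of $O(n^{u-1})$ hyperedges $g_{j,P}$ tailored to the edges $e_j$ of $F$, and add all $r$-sets meeting $M$ in at most $u-1$ vertices. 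The organizing invariant — no hyperedge of $H$ contains a $u$-subset of the large part $M$ — is made explicit and is verified in one line, and it immediately gives both that the planted core exists in $H+h$ and that it cannot be re-realized in $H$. Your approach buys self-containedness (no reliance on the $u$-uniform saturation theorem) and a cleaner, more robust argument: in particular it sidesteps the step in the paper where, for $|h\cap L|>u$, one must argue that some $u$-subset of $h\cap L$ is missing from $G$ — a point the paper passes over as ``clear'' but which is not entirely immediate when $u+1\le|h\cap L|<|V(F)|$, since $G$ may contain all $u$-subsets of $h\cap L$ without containing a copy of $F$. The paper's approach, in exchange, is shorter once Pikhurko's result is granted, and its $G$ is naturally $F$-free. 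Minor points in your write-up that you already flag and which are indeed routine: $|D|$ must be taken large enough that $\binom{|D|}{r-u}\ge m-1$ and $|Z|\ge r-u+1$; and the edgeless case $m=0$ should be disposed of separately (your argument handles $m=1$ correctly, with family (i) empty).
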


We remark that in the case $u=2$, the linearity of $osat_r(n,\cF)$ follows from either of the next two theorems, as they imply $sat_r(n,K_k)=O(n)$. Indeed, if $v$ is the number of vertices of any graph in $\cF$, then any Berge-$K_v$-saturated hypergraph is obviously Berge-$\cF$-oversaturated.

\begin{theorem}\label{complmulti}
For any $r,s \ge 2$ and any sequence of integers\/ $1 \le k_1 \le k_2 \le \dots \le k_{s+1}$ we have\/ $$sat_r(n,K_{k_1,k_2,\dots,k_{s+1}})=O(n).$$
\end{theorem}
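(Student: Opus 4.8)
The plan is to build, for each $n$, an $r$-uniform Berge-$K_{k_1,\dots,k_{s+1}}$-free hypergraph $H_n$ on $n$ vertices with $O(n)$ hyperedges so that adding any missing hyperedge creates a Berge copy of $K_{k_1,\dots,k_{s+1}}$. The key observation is that a Berge-$K_{k_1,\dots,k_{s+1}}$ needs $\binom{\sum k_i}{2} - \sum \binom{k_i}{2} = e(K_{k_1,\dots,k_{s+1}})$ distinct hyperedges whose cores form the complete multipartite graph on the appropriate vertex classes; so to stay Berge-free it suffices to keep the number of hyperedges ``locally'' below this threshold in a suitable sense, while to force saturation we want a small fixed ``gadget'' set $S$ of vertices carrying almost a complete multipartite pattern, so that any new hyperedge $h$ supplies the one missing adjacency.

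Concretely, first I would fix a set $S$ of $\sum_{i=1}^{s+1} k_i - 1$ vertices and put on $S$ a set of hyperedges that realizes a Berge copy of $K_{k_1,\dots,k_{s+1}}$ minus one edge: choose a near-partition of $S$ into classes of sizes $k_1,\dots,k_s$ and $k_{s+1}-1$, take all non-monochromatic pairs as cores, and thicken each such pair to an $r$-set using the same pool of vertices (this is where the condition $r,s\ge 2$ and $n$ large enough is used — we need enough vertices to make the thickenings distinct; one can instead thicken into a fixed small set of auxiliary vertices). This gives a constant number $O_r(1)$ of hyperedges, no Berge-$K_{k_1,\dots,k_{s+1}}$ yet (there are too few edges touching the ``deficient'' class), but the property that if any single extra pair on $S \cup \{w\}$ is covered then a Berge copy appears. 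Second, to guarantee that \emph{every} non-edge $h$ — including those disjoint from $S$ — completes a copy, I would add, for a handful of ``apex'' vertices, enough hyperedges so that $h$ together with $H_n$ always contains the missing adjacency; the cleanest route is to take a constant number of vertex-disjoint-ish copies of the $S$-gadget arranged so that any $h$ meets one of them in the right way, or to use that $h$ itself can play the role of the missing thickened edge while $S$ supplies the rest. Third, I would verify Berge-$F$-freeness of $H_n$ directly: count that the total number of hyperedges, being $O(1)$ plus a linear ``padding'' of isolated-ish hyperedges needed only for saturation, cannot support the $e(F)$ core edges on any $\sum k_i$ vertices — here one uses that outside the bounded gadget the hypergraph is too sparse, e.g. every vertex outside $S$ has bounded degree.

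The main obstacle is the tension between saturation and Berge-freeness for non-edges $h$ that avoid the gadget $S$ entirely: such an $h$ must still create a Berge-$F$, which seems to demand that $H_n$ already contain a Berge copy of $K_{k_1,\dots,k_{s+1}}$ missing exactly one \emph{thickened} hyperedge near every vertex, and naively that forces $\Theta(n)$-or-worse structure. The resolution I expect to use — and the technical heart of the argument — is that $h$ is an $r$-set with $r\ge 2$, so $h$ can be used to cover a core \emph{pair} of our choice; thus it suffices to arrange that for every pair $\{x,y\}$ with $x,y\in V(H_n)$ there is a Berge-$K_{k_1,\dots,k_{s+1}}$ in $H_n + h$ using $h\supset\{x,y\}$ as one thickened edge. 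This reduces the problem to covering all pairs of a small ``core skeleton'' together with two free vertices, which is achievable with linearly many hyperedges by letting a linear number of vertices each be joined (via thickened edges) to one common fixed copy of $K_{k_1,\dots,k_{s+1}}$ minus a vertex; distinctness of the thickenings is arranged using fresh vertices, and the bound $n$ large enough makes this possible. I would then close by a short double-counting check that this construction is Berge-$F$-free, completing the proof; the complete-multipartite structure (as opposed to general $F$) is exactly what makes the ``minus one vertex'' gadget have the clean all-pairs-completion property.
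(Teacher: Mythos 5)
There is a genuine gap, and it sits exactly where you flag the ``main obstacle'': the saturation verification for hyperedges avoiding your gadget. You aim to build a saturated $H_n$ directly, i.e.\ you want that every missing $r$-set $h$ creates a Berge copy, and you propose to do this by joining a linear number of vertices by thickened edges to one fixed copy of $K_{k_1,\dots,k_{s+1}}$ minus a vertex, ``using fresh vertices'' for the thickenings. Two things break here. First, if the fresh vertices are genuinely fresh (used once), then almost all of them have bounded degree, and a new hyperedge $h$ lying entirely among such low-degree vertices does not create any Berge copy, since the core of a Berge-$K_{k_1,\dots,k_{s+1}}$ needs every vertex to have degree at least $\sum_{i=1}^s k_i$; so you need the fresh vertices to be reused systematically (this is the role of the blocks $B_i$ of size $r-2$ in the paper's construction), and once you do that, your ``double-counting'' Berge-freeness check and your saturation argument both require the bookkeeping you have not supplied. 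Second, even with the blocks in place, it is simply not true that adding a single hyperedge $h$ to the resulting $\cG$ always yields a Berge-$K_{k_1,\dots,k_{s+1}}$: $h$ provides only one thickened edge, and if $k_{s+1}\ge 2$ the largest class needs several new vertices, so one may need several extra hyperedges.

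The paper avoids this entirely by \emph{not} trying to exhibit a saturated hypergraph directly. It constructs a Berge-$K$-free $\cG$ with $O(n)$ hyperedges and then shows that \emph{every} Berge-$K$-free extension of $\cG$ still has $O(n)$ hyperedges; since some saturated extension of $\cG$ exists, $sat_r(n,K)=O(n)$ follows. To bound the extra hyperedges $\cF$ it shows that $\cG\cup\cF$ Berge-$K$-free forces the trace of $\cF$ on $\cup B_i$ to be Berge-star-free, and then appeals to the Frankl--Pach theorem on disjointly representable families to bound the local degree. This ``bound every Berge-free extension, not just one saturated instance'' step is the key lemma your proposal is missing, and it is precisely what rescues the linear bound from the tension you identified. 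Additionally, the paper splits into two cases on $r$ relative to $\sum k_i$: for small $r$ it takes all $r$-subsets of $C\cup B_i$ (no thickening bijection issues), and the star/Frankl--Pach machinery is only needed for large $r$; your sketch does not address why a single construction would cover both regimes.
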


Let $F$ be a fixed graph on $v$ vertices with degree sequence $d_1\le d_2\le \dots \le d_v$. Set $\delta:=d_2-1$. We say that $F$ is of \textit{type I} if there exist vertices $u_1$, $u_2$ with $d_F(u_1)=d_1$, $d_F(u_2)=d_2$ that are joined with an edge. Otherwise $F$ is called of type II. Observe that any regular graph is of type I. 

\begin{theorem}\label{type1}
For any graph\/ $F$ of type I and any\/ $r\ge 3$ we have\/ $sat_r(n,F)=O(n)$. 
\end{theorem}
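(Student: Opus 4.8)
The plan is to construct, for each large $n$, an $r$-uniform Berge-$F$-free hypergraph $H_n$ with $O(n)$ hyperedges that is Berge-$F$-saturated. The guiding intuition comes from the K\'asz\'onyi--Tuza bound in the graph case and from the structure of a type~I graph $F$: if $u_1u_2$ is an edge with $d_F(u_1)=d_1$ and $d_F(u_2)=d_2$, then deleting this edge and the vertex $u_1$ (together with its remaining $d_1-1$ incident edges) leaves a graph $F'$ on $v-1$ vertices with $|E(F)|-d_1$ edges, and any Berge copy of $F$ can be detected by a Berge copy of this $F'$ plus a small ``attachment gadget'' realizing the low-degree vertex $u_1$. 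So the idea is to build $H_n$ as the union of a bounded-size ``core gadget'' $C$ on $O(1)$ vertices that already contains essentially a Berge-$F'$, and a linear-size ``padding'' part ensuring that any added hyperedge $h$, wherever its vertices lie, can be used as one of the $d_2$ edges at $u_2$ (or one of the $d_1$ edges at $u_1$) to complete a Berge-$F$ together with hyperedges of $H_n$.

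The key steps, in order, are: (1) Fix the edge $u_1u_2$ and the derived graph $F'=F-u_1$; record that $\delta=d_2-1$ and that $u_1$ has $d_1-1$ neighbors besides $u_2$. (2) Describe the gadget $C$: take a constant number of vertices and hyperedges so that $C$ contains a Berge-$F'$ but is itself Berge-$F$-free, and moreover contains enough ``spare'' hyperedges meeting a designated set of $\delta$ apex vertices, so that once one more edge through $u_2$ is supplied the Berge-$F$ closes up — this is exactly where the hypothesis $d_F(u_2)=d_2$ (the \emph{second} smallest degree) is used, since we need all but one of the $u_2$-edges already present. (3) Add the padding: make a large matching-like or sunflower-like family so that every vertex of $V(H_n)$ lies in a hyperedge that can play the role of an edge at $u_1$, using that $u_1$ has the \emph{smallest} degree $d_1$ and that $r\ge 3$ gives us room to route Berge edges through arbitrary vertices. (4) Verify saturation: given any non-edge $h$, split on whether $h$ meets the gadget; in all cases exhibit a bijection from $E(F)$ to $h$ plus $|E(F)|-1$ existing hyperedges with the containment property, carefully using the distinct extra vertices available because $r>2$. (5) Verify Berge-$F$-freeness of $H_n$: argue that any putative Berge-$F$ would force too many hyperedges incident to the bounded core or would need a vertex of the padding to have Berge-degree at least $d_2$, which the construction forbids.

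I expect the main obstacle to be step~(5) together with the interaction between the gadget and the padding in step~(4): one must choose the padding sparse enough that it creates no Berge-$F$ on its own and introduces no Berge-$F$ when combined with the $O(1)$ gadget hyperedges, yet dense enough that \emph{every} potential new hyperedge $h$ — including hyperedges entirely inside the padding, entirely inside the gadget, or straddling both — does complete a Berge-$F$. Handling $h\subseteq C$ is delicate because then we cannot use $h$ to reach the padding, so $C$ must be built ``one hyperedge short'' of a Berge-$F$ in a robust way (any internal non-edge completes it), while simultaneously being genuinely Berge-$F$-free. The type~I hypothesis is what makes this balancing act possible: the two cheapest vertices $u_1,u_2$ are adjacent, so the ``missing'' hyperedge always has the freedom to be the $u_1u_2$-edge, which has the most slack in where its $r-2$ extra vertices go. The complete-multipartite case is already handled by \tref{complmulti}, so here $F$ may be assumed not complete multipartite, which (after a short argument) lets us assume $d_1<v-1$, i.e.\ there is a non-neighbor of $u_1$; this gives additional room and should be invoked when routing Berge edges to avoid accidental copies.

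I would also isolate as a preliminary lemma the statement that it suffices to treat connected $F$ with $v\ge \delta+2$ and to produce the construction for $n$ larger than some constant $n_0(F,r)$, since for bounded $n$ the bound $O(n)$ is vacuous and disconnected or tiny cases can be padded with isolated vertices. With that reduction in place, the construction and the two verifications (freeness, saturation) constitute the whole proof, and the constant hidden in $O(n)$ will depend only on $F$ and $r$ (essentially $|E(F)|$ times the padding density plus $|E(C)|$).
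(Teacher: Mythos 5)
Your plan has a genuine gap that you partially identify but do not close: you require the gadget $C$ to be ``one hyperedge short of a Berge-$F$ in a robust way,'' i.e.\ every $r$-set inside $C$ that is not a hyperedge must complete a Berge-$F$ while $C$ itself stays Berge-$F$-free. That is precisely the assertion that $C$ is Berge-$F$-saturated, so your reduction asks you first to solve the saturation problem on the bounded gadget and then check that the padding does not spoil either freeness or saturation; you offer no construction of such a $C$ and no reason it can coexist with a Berge-$F'$ inside. The paper sidesteps this entirely by \emph{not} constructing a saturated hypergraph. It builds an ``$F$-good'' hypergraph $\cG$ (Berge-$F$-free but typically not saturated) and then shows, via Claim~\ref{extend}, that \emph{any} Berge-$F$-saturated $\cG'\supseteq\cG$ contains only $O(n)$ extra hyperedges, because every hyperedge of $\cG'\setminus\cG$ must meet at most one block $B_i$ and hence lie inside $C\cup R\cup B_i$. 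The hyperedges inside the gadget — your hard case $h\subseteq C$ — are never built, only counted. That bounding-the-extension step is the missing idea, and without it the ``balancing act'' you flag has no resolution.

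There is a second, structural divergence that also weakens your freeness argument. You delete only $u_1$ and put a Berge copy of $F'=F-u_1$ inside the gadget, so a putative Berge-$F$ core is forced to have at least one vertex outside $C$. The paper instead deletes \emph{both} $u_1$ and $u_2$, taking $|C|=v-2$, so that any Berge-$F$ core has at least \emph{two} vertices outside $C$, each of degree exactly $\delta=d_2-1$ in $\cG$. Since $F$ has at most one vertex of degree $\le\delta$, two such core vertices cannot both exist, and freeness follows in one line. With only one forced external vertex, that vertex may legitimately play the unique minimum-degree vertex $u_1$, so the degree bound on the padding alone does not rule out a Berge-$F$; you would then need delicate control of how the padding hyperedges attach to the gadget, which is exactly what the symmetric $v-2$ choice lets the paper avoid.
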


\section{Proofs}

\begin{proof}[Proof of Proposition \ref{oversa}]
We define $V(H)$ as the disjoint union of a set $R$ of size $r-u$ and a set $L$ of size $n-r+u$. We take an $F$-saturated $u$-uniform hypergraph $G$ with vertex set $L$ that contains $O(n^{u-1})$ 
hyperedges. Such a hypergraph exists by the celebrated result of Pikhurko \cite{P2004}.
As another alternative, one may take an oversaturated hypergraph with $O(n^{u-1})$ hyperedges, whose existence is guaranteed by \cite[Theorem 1]{t92}.
Then we let the hyperedges $h$ of $H$ be the $r$-sets with the property that $h\cap L$ is a hyperedge of $G$ 
or has at most $u-1$ vertices from $L$.

Obviously $H$ has $O(n^{u-1})$ hyeredges. Clearly we have that every $r$-set $h$ that is not a hyperedge of $H$ contains a $u$-element subset $e$ of $L$ that is not a hyperedge of $G$.
Then $e$ creates a copy of $F$. We let $f(e)=h$, and for each other edge $e'$ of that copy of $F$, we let $f(e')=e'\cup R$. This shows that this copy of $F$ is the core of a Berge-$F$.
\end{proof}




\begin{proof}[Proof of Theorem \ref{complmulti}]
We consider two cases according to how large the uniformity $r$ is compared to the sum of class sizes $k_1,k_2,\dots,k_{s+1}$. We set $N:=\sum_{i=1}^sk_i-1$. For brevity, we write $K$ for $K_{k_1,k_2,\dots,k_{s+1}}$.

\vskip 0.15truecm

\textsc{Case I. } $r\le \sum_{i=1}^{s+1}k_i-3$

\vskip 0.1truecm

Let $(C,B_1,B_2,\dots,B_m,R)$ be a partition of $[n]$ with $|C|=N$, $|B_i|=k_{s+1}$ for all $i \le m$ where $m=\lfloor \frac{n-N}{k_{s+1}} \rfloor$, and $|R|\equiv n-N ~(mod ~k_{s+1})$, $|R|<k_{s+1}$. Consider the family $\cG=\{A\in\binom{[n]}{r}: A\subset C \cup B_i ~\text{for some $i$}\}$. Observe that $\cG$ is Berge-$K$-free. Indeed, a copy of a Berge-$K$ must contain a vertex $v$ in the smallest $s$ classes of the core from outside $C$. But then, if $v\in B_i$, either the whole copy is in $C\cup B_i$ or $C$ must contain all classes of the core of the copy. As none of these are possible, $\cG$ is indeed Berge-$K$-free.

Next observe that adding any $r$-set $G$ to $\cG$ that contains two vertices $u$ and $v$ from different $B_i$s, say $u\in B_i,v\in B_j$ $(i \neq j)$, would create a copy of a Berge-$K$. Indeed, the assumption $r\le \sum_{i=1}^{s+1}k_i-3$ ensures that there exist bijections $f_i:\binom{C\cup B_i}{2} \rightarrow \binom{C\cup B_i}{r}$ with $e\subset f(e)$. Then vertices of $C$ and $u$ can play the role of the $s$ smallest classes of $K$, and $\{v\}\cup B_i\setminus \{u\}$ can play the role of the largest class of $K$.

This shows that the additional hyperedges of any $K$-saturated family that contains $\cG$ are subsets of $C\cup B_i\cup R$ for some $i$, and hence there is only a linear number of them. As $\cG$ also contains a linear number of hyperedges, the total size of such $K$-saturated families is $O(n)$. 

\vskip 0.15truecm

\textsc{Case II. } $r\ge \sum_{i=1}^{s+1}k_i-2$

\vskip 0.1truecm

Once again, we define a partition $(C,B_1,B_2,\dots,B_m,R)$ of $[n]$ with $|C|=N$, but now with $|B_i|=r-2$ and $|R|\equiv n-N ~(mod ~r-2)$, $|R|<r-2$. Let $x_1,x_2,\dots,x_{N}$ be the elements of $C$, let $e_1,e_2,\dots,e_{\binom{N}{2}}$ be the edges of the complete graph on $C$, and finally let $\pi_1, \pi_2,\dots,\pi_{\binom{N}{2}}$ be permutations of $C$ such that the endvertices of $e_i$ are the values $\pi_i(j),\pi_i(j+1)$ for some~$j$.

Then let us define the family $\cG$ as
\[
\cG=\bigcup_{i=1}^m\{\{x_{\pi_h(j)},x_{\pi_h(j+1)}\}\cup B_i:1\le j\le N, \ h\equiv i \ (\mbox{\rm mod } {\textstyle \binom{N}{2})\}}.
\]

First, we claim that $\cG$ is Berge-$K$-free. Indeed, there are only $N$ vertices with degree at least $N+1$.

Next, observe that if we add a family $\cF$ to $\cG$ that contains a Berge-$S_{k_{s+1}(r-2)}$ (a star with $k_{s+1}(r-2)$ leaves) with core completely in $\cup_{i=1}^m B_i$, then $\cG\cup \cF$ contains a Berge-$K$. Indeed, if $v$ is the center of the star, then $C\cup \{v\}$ plays the role of the smallest classes of $K$, and $k_{s+1}$ leaves that belong to distinct $B_i$s can play the role of the largest class of $K$. Here, we use the facts that every edge in $\binom{C}{2}$ is contained in an unbounded number of hyperedges of $\cG$ as $n$ tends to infinity and that for any vertex $u\in\cup_{i=1}^mB_i$, $\cG$ contains a Berge-star with center $u$ and core $C\cup \{u\}$; and if $u$ and $u'$ belong to different $B_i$'s, then the hyperedges of these Berge-stars are distinct.

Let $\cF$ be such that $\cF\cup \cG$ is Berge-$K$-free. Then by the above, $\cF'=\{F\cap (\cup_{i=1}^mB_i):F\in \cF\}$ is Berge-$S_{k_{s+1}(r-2)}$-free. Note that $|\cF|\le 2^{|C\cup R|}|\cF'|$, thus showing that $|\cF'|=O(n)$ finishes the proof. It is well-known that forbidding a Berge-star (or any Berge-tree) results in $O(n)$ hyperedges, but for sake of completeness we include a proof for stars. 

Observe that $\cF'$ being Berge-$S_{k_{s+1}(r-2)}$-free is equivalent to the condition that for every $x\in \cup_{i=1}^mB_i$ the family $\{F\setminus \{x\}:x\in F\in \cF'\}$ is not disjointly $k_{s+1}(r-2)$-representable, i.e.\ there do not exist $y_1,y_2,\dots,y_{k_{s+1}(r-2)}$ and sets $F_1,F_2,\dots,F_{k_{s+1}(r-2)}\in \cF'_x$ with $y_\alpha \in F_\alpha\setminus \cup_{j=1,j\neq \alpha}^{k_{s+1}(r-2)}F_j$. By a well-known result of Frankl and Pach \cite{FP} if all sets 
of a family $\cH$ with this property have size at most $r$, then $|\cH|$ is bounded by a constant depending only on $r$ and $k_{s+1}(r-2)$. That is, $|\cF'_x|$ is bounded by the same constant independently of $x$, and therefore the size of $\cF'$, and thus the size of $\cG$ is linear. We obtained that any $k$-saturated family $\cG'$ with $\cG\subset \cG'$ has $O(n)$ hyperedges.
\end{proof}

\begin{proposition}\label{isoedge}
Let\/ $F$ be a graph with no isolated vertex and with an isolated edge\/ $(u_1,u_2)$.
Then for any\/ $r\ge 3$ we have\/ $sat_r(n,F)=O(n)$.
\end{proposition}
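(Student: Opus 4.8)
The plan is to construct, for every $r\ge 3$, a Berge-$F$-saturated $r$-uniform hypergraph on $[n]$ with $O(n)$ hyperedges. Let $F'$ be the graph obtained from $F$ by deleting $u_1$ and $u_2$. Since $u_1,u_2$ are adjacent to each other and to nothing else and $F$ has no isolated vertex, $F'$ has no isolated vertex either. Write $v':=|V(F')|=|V(F)|-2$ and $m:=|E(F')|=|E(F)|-1$. (If $v'=0$, i.e.\ $F=K_2$, then $H=\emptyset$ is Berge-$F$-saturated since $r\ge 3$, and the construction below degenerates to this; so one may keep $v'\ge 2$ in mind.)

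First I would fix a set $W\subseteq[n]$ with $|W|=v'$, an $(r-2)$-element set $A\subseteq[n]\setminus W$, and a bijection $\phi\colon V(F')\to W$, and set $\mathcal{E}_0:=\{\phi(e)\cup A:e\in E(F')\}$. Intersecting any member of $\mathcal{E}_0$ with $W$ recovers the corresponding $\phi(e)$, so these $m$ hyperedges are distinct and $\mathcal{E}_0$ is a Berge-$F'$ with core $W$. I would also set $\mathcal{G}_W:=\{h\in\binom{[n]}{r}:|h\cap W|\ge r-1\}$, noting that $|\mathcal{G}_W|=O(n)$ because $W$ has constant size. Finally, let $H$ be a \emph{maximal} Berge-$F$-free subfamily of $\mathcal{G}_W\cup\mathcal{E}_0$ subject to $\mathcal{E}_0\subseteq H$; such an $H$ exists since $\mathcal{E}_0$ is itself Berge-$F$-free, having only $m=|E(F)|-1$ hyperedges. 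Then $H$ is Berge-$F$-free by choice and $|H|\le|\mathcal{G}_W|+|\mathcal{E}_0|=O(n)$, so everything hinges on showing that $H$ is Berge-$F$-saturated.

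To verify saturation I would take an arbitrary $h\in\binom{[n]}{r}\setminus H$ and split into two cases. If $|h\cap W|\ge r-1$, then $h\in\mathcal{G}_W\setminus H$, and the maximality of $H$ inside $\mathcal{G}_W\cup\mathcal{E}_0$ forces $H\cup\{h\}$ to contain a Berge-$F$. If $|h\cap W|\le r-2$, then $|h\setminus W|\ge 2$, so $h$ has two distinct vertices $a,b$ outside $W$; I would then extend the Berge-$F'$ realised by $\mathcal{E}_0\subseteq H$ to a Berge-$F$ in $H\cup\{h\}$ by additionally sending $u_1\mapsto a$, $u_2\mapsto b$ and the isolated edge $u_1u_2\mapsto h$. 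This is a valid Berge-$F$: the core vertices $W\cup\{a,b\}$ are pairwise distinct, the image hyperedges $\mathcal{E}_0\cup\{h\}$ are pairwise distinct (as $h\notin\mathcal{E}_0$), and $\{a,b\}\subseteq h$. It is exactly here that the hypothesis is used: since $u_1,u_2$ form an isolated edge, extending a Berge-$F'$ on $W$ to a Berge-$F$ asks only for one further hyperedge containing two vertices off $W$, and any $h\notin\mathcal{G}_W$ supplies such a hyperedge.

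The step I expect to need the most care is not any estimate but the choice of the ambient family $\mathcal{G}_W\cup\mathcal{E}_0$ inside which $H$ is taken maximal. It must be small (of size $O(n)$); its complement among all $r$-sets must consist only of sets with at least two vertices outside $W$, so that the fixed Berge-$F'$ on $W$ absorbs them; and one must resist taking $H$ maximal among \emph{all} Berge-$F$-free hypergraphs, which is the unbounded object we cannot control. The threshold $|h\cap W|\ge r-1$ is the one value reconciling these requirements, and once it is in place the verification above is routine; the only quantitative input is $|\mathcal{G}_W|=O(n)$.
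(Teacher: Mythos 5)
Your proof is correct and follows essentially the same approach as the paper: fix a $(v-2)$-set $W$ (the paper's $C$), realize a Berge-$F'$ there, take a maximal Berge-$F$-free family among $r$-sets having at most one vertex off $W$, and observe that any other $r$-set supplies two vertices off $W$ to play the isolated edge $u_1u_2$. The only stylistic difference is that by padding the Berge-$F'$ hyperedges with a fixed $(r-2)$-set $A$ you handle all $r\ge 3$ in a single construction, whereas the paper splits into the cases $r\le v-1$ (where $\cG_0\subseteq\cG_{C,1}$) and $r\ge v$ (where $e(F)-1$ hyperedges through $C$ are already saturated).
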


\begin{proof}
Let $U$ be a set of size $n$. Let $v$ denote the number of vertices of $F$, let $F'$ be the graph obtained from $F$ by removing the edge $(u_1,u_2)$ and let $C$ be a $(v-2)$-subset of $U$. Suppose first $r\le v-1$, and let $\cG_0$ be a Berge copy of $F'$ with core $C$ and $\cG_0 \subseteq \cG_{C,1}\subseteq \binom{U}{r}$, where $\cG_{C,1}$ is the set of $r$-sets that contain at most one vertex from $U \setminus C$. Note that $\cG_{C,1}$ contains a linear number of $r$-subsets. Then let $\cG$ be an $r$-graph with $\cG_0\subseteq \cG \subseteq \cG_{C,1}$ such that any $H \in \cG_{C,1}\setminus \cG$ creates a Berge copy of $F$ with $\cG$. Then $\cG$ has linearly many hyperedges and is clearly $F$-saturated since if $G$ contains at least two vertices from $U \setminus C$, then $G$ can play the role of $(u_1,u_2)$ and together with the Berge copy of $F'$ they form a Berge-$F$. 

If $r\ge v$, then any $\cG$ with $e(F)-1$ $r$-subsets sharing $v-2$ common elements (denote their set by $C$) is $F$-saturated. Indeed, any additional $r$-set $G$ contains at least 2 vertices not in $C$, so those two vertices can play the role of $u_1$ and $u_2$, $G$ can play the role of the edge $(u_1,u_2)$, and the $r$-sets of $\cG$ form a Berge copy of $F'$ with core $C$. 
\end{proof}

Observe that if $F$ is of type I, then it cannot contain isolated vertices, and since graphs with an isolated edge
are covered by Proposition \ref{isoedge}, we may and will assume that $d_2-1=\delta\ge 1$
holds.

\begin{proof}[Proof of Theorem \ref{type1}]
Let $F$ be a graph of type I on $v$ vertices and let $u_1,u_2$ be a pair of vertices of $F$ showing the type I property. Set $d:=|N(u_1)\cap N(u_2)|$ and let $F'$ denote the subgraph of $F$ on $N(u_1)\cap N(u_2)$ spanned by the edges incident to $u_1$ or $u_2$ with the edge $(u_1,u_2)$ removed. Our strategy to prove the theorem is to construct a Berge-$F$-free $r$-graph $\cG$ with $O(n)$ hyperedges 
such that any $F$-saturated $r$-graph $\cG'\supset \cG$ contains at most a linear number of extra hyperedges.

Let us say that $\cG$ is $F$-good if its vertex set $V$ can be partitioned into $V=C\cup B_1\cup B_2\cup \dots \cup B_m \cup R$ such that $|C|=v-2$, all $B_i$'s have equal size $b$ at most $r$, $|R|<b$ and the following properties hold:
\begin{enumerate}
    \item 
    every hyperedge of $\cG$ not contained in $C$ is of the form $A \cup B_i$  for some $i=1,2,\dots,m$ with $A \subset C$,
    \item
    every vertex $u\in \cup_{i=1}^mB_i$ has degree $\delta$ in $\cG$,
    \item
    for every $1\le i<j\le m$ and $y\in B_i, y'\in B_j$, the sub-$r$-graph $\{G\in \cG:y\in G\}\cup \{G\in \cG:y'\in G\}$ contains a Berge-$F'$ with $y,y'$ being the only vertices of the core not in $C$ and $y,y'$ playing the role of $u_1,u_2$,
    \item
    for any $1\le i<j\le m$ there exist $\binom{v-2}{2}$ hyperedges $G_1,G_2,\dots, G_{\binom{v-2}{2}}\in\cG$ that are disjoint from $B_i\cup B_j$ and if $e_1,e_2,\dots,e_{\binom{v-2}{2}}$ is an enumeration of the edges of the complete graph on $C$, then $e_h\subset G_h$ for all $h=1,2,\dots,{\binom{v-2}{2}}$, i.e., these hyperedges form a Berge-$K_{v-2}$ with core $C$.
\end{enumerate}

\begin{claim}\label{extend}
If $\cG$ is $F$-good, then $\cG$ is Berge-$F$-free and any $F$-saturated supergraph $\cG'$ of $\cG$ contains at most a linear number of extra edges compared to $\cG$.
\end{claim}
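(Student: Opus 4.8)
The plan is to prove the two parts of Claim~\ref{extend} separately: first that an $F$-good hypergraph $\cG$ is Berge-$F$-free, and then that any $F$-saturated supergraph $\cG'$ adds only $O(n)$ hyperedges.

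\textbf{Berge-$F$-freeness.} Suppose for contradiction that $\cG$ contains a Berge-$F$ with core $\widehat F$ (a copy of $F$ on some vertex set) and injection $f$. The key structural fact is property (1): every hyperedge of $\cG$ either lies inside $C$ (which has only $v-2 < v$ vertices, so it cannot by itself host a core of $F$) or has the form $A\cup B_i$. Since $F$ has $v$ vertices and $|C|=v-2$, the core $\widehat F$ must use at least two vertices outside $C$; let $x,y$ be two such vertices, necessarily lying in $\cup_i B_i\cup R$. I would first argue that $R$ plays no role: vertices of $R$ lie in no hyperedge of $\cG$ at all (by (1), since $R$ is disjoint from $C$ and from every $B_i$), so every vertex of positive degree in $\widehat F$ lies in $C\cup\bigcup_i B_i$; as $F$ has minimum degree $d_1\ge 1$ and at most one isolated-edge component has been ruled out, all $v$ core-vertices have positive degree, hence $x,y\in\bigcup_i B_i$. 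Now the crucial point is the degree bound (2): each vertex $u\in\bigcup_i B_i$ has $\cG$-degree exactly $\delta=d_2-1$, so in the Berge-$F$ it can be the image-base of at most $\delta$ core-edges, i.e.\ its core-degree is at most $\delta = d_2-1 < d_2$. But the core $\widehat F\cong F$ has at most one vertex of degree $d_1\le \delta$ and the rest of degree $\ge d_2 > \delta$; having two vertices $x,y$ of core-degree $\le\delta$ forces $d_1\le\delta$ and $d_2\le\delta$, contradicting $\delta=d_2-1$. Hence no Berge-$F$ exists.

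\textbf{The saturated supergraph adds only $O(n)$ edges.} Let $\cG'\supseteq\cG$ be $F$-saturated. I want to show that almost every $r$-set $G\notin\cG$ already creates a Berge-$F$ together with $\cG$, so $\cG'\setminus\cG$ is confined to a linear-size family. The natural candidates for the ``bad'' $r$-sets are those meeting at most one of the blocks $B_i$; the number of such $r$-sets is $O(n)$ since each is determined by a choice of (at most) one block $B_i$ together with a bounded-size subset of $C\cup R\cup B_i$ — wait, that is not quite $O(n)$ unless the intersection with $C\cup R$ is essentially free; more carefully, the $r$-sets that intersect at most one $B_i$ are at most $\binom{n}{r}$ in general, so this counting must be done differently. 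The right statement is: I will show that any $r$-set $G$ that contains two vertices $y\in B_i$, $y'\in B_j$ from \emph{distinct} blocks creates a Berge-$F$ with $\cG$. Given such a $G$, use property (3) to find a Berge-$F'$ inside $\cG$ with $y,y'$ outside $C$ playing the roles of $u_1,u_2$; use property (4) to realize the remaining edges of $F$ among $C$-vertices (the edges of $F$ not incident to $u_1$ or $u_2$ span a subgraph of $K_{v-2}$ on $C$, realized by the $\binom{v-2}{2}$ designated hyperedges, which by (4) are disjoint from $B_i\cup B_j$ and hence distinct from the hyperedges used in step (3)); and finally let $f$ send the edge $(u_1,u_2)$ to $G$ itself. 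One must check that the three batches of hyperedges — the Berge-$F'$ edges from (3), the Berge-$K_{v-2}$ edges from (4), and $G$ — are pairwise distinct and collectively constitute a valid Berge-$F$; distinctness of the (3)-edges from the (4)-edges follows because the latter avoid $B_i\cup B_j$ while each (3)-edge contains $y$ or $y'$, and $G\notin\cG$ handles the last edge. Therefore every $r$-set outside $\cG$ that is \emph{not} contained in some $C\cup R\cup B_i$ would create a Berge-$F$, so it cannot be added to $\cG'$; consequently all extra hyperedges of $\cG'$ lie in $\bigcup_i \binom{C\cup R\cup B_i}{r}$, a union of $m=O(n)$ sets each of bounded size $\binom{|C|+|R|+b}{r}=O(1)$, giving $|\cG'\setminus\cG| = O(n)$.

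\textbf{Main obstacle.} The delicate point is the bookkeeping in the second part: verifying that the hyperedges supplied by properties (3) and (4) can be chosen simultaneously and remain pairwise distinct (and distinct from $G$), so that $f$ is genuinely a bijection onto a set of $|E(F)|$ hyperedges and each core-edge is contained in its image. Property (4) was stated precisely to guarantee the needed disjointness from $B_i\cup B_j$, and the core-degree considerations from the first part ensure the edges of $F'$ together with $(u_1,u_2)$ and the $K_{v-2}$-edges exhaust $E(F)$; still, one must confirm that $F'$ together with the $C$-spanning part recovers \emph{all} edges of $F$ — i.e.\ that every edge of $F$ is either $(u_1,u_2)$, or incident to $u_1$ or $u_2$, or lies within $N(u_1)\cap N(u_2)$. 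This last containment is not automatic for arbitrary type-I graphs, so the argument really needs the specific construction of $\cG$ (to be given after the claim), where $C$ is identified with $V(F)\setminus\{u_1,u_2\}$ and the hyperedges are built to carry exactly the edges of $F$ not in the $F'$-part; the claim as stated is thus the abstract skeleton, and the care lies in matching it to that construction.
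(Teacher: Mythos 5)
Your proof follows the same two-step strategy as the paper (degree-counting for Berge-$F$-freeness, then confinement of new hyperedges to within a single $C\cup R\cup B_i$), and both halves are sound. The worry you raise in the ``Main obstacle'' paragraph, however, is a misreading rather than a real gap: you ask whether every edge of $F$ is either $(u_1,u_2)$, incident to $u_1$ or $u_2$, or ``lies within $N(u_1)\cap N(u_2)$'' -- but the third alternative should simply be ``lies within $V(F)\setminus\{u_1,u_2\}$'', which is a trivially exhaustive trichotomy. Property (4) of $F$-goodness supplies a Berge-$K_{v-2}$ with core $C$, disjoint from $B_i\cup B_j$, so it realizes \emph{any} subgraph on $C$, in particular the restriction $F[V(F)\setminus\{u_1,u_2\}]$; property (3) realizes the edges of $F$ at $u_1$ or $u_2$ other than $(u_1,u_2)$; and $G$ itself realizes $(u_1,u_2)$. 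The disjointness checks you flag are exactly what (4)'s ``disjoint from $B_i\cup B_j$'' clause was designed for, as you correctly observe. So the claim is a clean, self-contained statement, not merely a skeleton awaiting Claim~\ref{Fgood}'s particulars. Minor remark on the first half: you need not argue separately that $R$-vertices are absent from the core -- a vertex of $R$ has $\cG$-degree $0\le\delta<d_2$, so any two core vertices outside $C$ (whether in some $B_i$ or in $R$) have $\cG$-degree at most $\delta$, already giving the contradiction; the paper states exactly this, if a touch tersely.
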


\begin{proof}[Proof of Claim.]
Observe first that $\cG$ is Berge-$F$-free as the core of a copy of a Berge-$F$ should contain at least two vertices not in $C$, both of degree $\delta<d_2$.

Next, we claim that for any hyperedge $H$ meeting two distinct $B$'s, say $B_i$ and $B_j$, the $r$-graph $\cG\cup \{H\}$ contains a Berge-$F$. Indeed, let $y\in B_i\cap H, y'\in B_j\cap H$. Then by item 3 of the $F$-good property, $y$ can play the role of $u_1$ and $y'$ can play the role of $u_2$, $H$ can play the role of the edge $(u_1u_2)$, and item 4 of the $F$-good property ensures that the other vertices of $C$ can play the role of the rest of the core of $F$.

Finally, let $\cG'$ be any $F$-saturated $r$-graph containing $\cG$. Then by the above, any hyperedge in $\cG'\setminus \cG$ meets at most one $B_i$, and thus is of the form $P\cup Q$ with $P\subset C\cup R$, $Q\subset B_i$ for some $i$. The number of such sets is at most $2^b\, 2^{v-2+b}\, m=O(n)$. 
\end{proof}

\begin{claim}\label{Fgood}
For any type I graph $F$ on $v\ge 7$ vertices with $\delta>0$ and any integer $r\ge 6$ there exists an $F$-good $r$-graph $\cG$ with $O(n)$ hyperedges.
\end{claim}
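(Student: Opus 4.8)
The plan is to construct the $F$-good hypergraph $\cG$ explicitly, building on the partition idea used in Case II of the proof of Theorem~\ref{complmulti}. First I would set $b:=r$ (so that each $B_i$ has exactly $r$ vertices) and partition the vertex set as $V=C\cup B_1\cup\dots\cup B_m\cup R$ with $|C|=v-2$, $|B_i|=r$, $m=\lfloor (n-v+2)/r\rfloor$, and $|R|<r$. The hyperedges inside each block $B_i$ will be of the form $A\cup B_i$ for suitable $(v-2-r)$-sets... wait, since $b=r$ we cannot add anything, so instead I would take $b<r$, say $b=r-1$ or choose $b$ depending on $\delta$ and $d$; the precise value of $b$ is exactly what has to be tuned to make properties 2--4 simultaneously satisfiable. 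For each block $B_i$, I would designate a family of $r$-sets of the form $A\cup B_i$ with $A\subset C$ and $|A|=r-b$, where the sets $A$ are chosen so that (i) each vertex of $B_i$ lies in exactly $\delta$ of them (property 2 — since all chosen hyperedges contain all of $B_i$, every vertex of $B_i$ has the same degree, namely the number of chosen $A$'s for that block, so we take exactly $\delta$ such sets $A_1^{(i)},\dots,A_\delta^{(i)}$), and (ii) these $\delta$ sets, viewed through the correspondence with edges of $F$ incident to $u_1$ or $u_2$, carry a Berge-$F'$ structure. Here $F'$ has exactly $d=|N(u_1)\cap N(u_2)|$ vertices and some number of edges; I would need $\delta$ to be at least $e(F')$ and the sets $A_j^{(i)}$ large enough (i.e.\ $r-b$ large enough) to embed the $d$ core vertices plus have room, which is where the hypotheses $v\ge 7$ and $r\ge 6$ enter.

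For property 3, given two blocks $B_i,B_j$ with $y\in B_i$, $y'\in B_j$, I want the union of the stars at $y$ and at $y'$ to contain a Berge-$F'$ in which $y,y'$ play the roles of $u_1,u_2$. Since every hyperedge containing $y$ has the form $A_s^{(i)}\cup B_i$ and thus contains all of $B_i\supset\{y\}$ and the $A_s^{(i)}\subset C$, I would use vertices of $C$ as the $d$ core vertices of $F'$ and route the edges of $F'$ incident to $u_1$ through hyperedges at $y$ and those incident to $u_2$ through hyperedges at $y'$, using the freedom in choosing the $A_s$'s to guarantee each required edge of $F'$ sits inside an appropriate $A_s$. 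The key point is that the $\delta$ hyperedges at $y$ must realize, as a Berge structure, all edges of $F'$ incident to $u_1$ (at most $d_1-1$ of them, since the edge $(u_1,u_2)$ was removed), and likewise for $y'$; since $\delta=d_2-1\ge d_1-1$ this is numerically feasible, and one arranges the incidence pattern combinatorially. Property 4 is easiest: because $C$ has only $v-2$ vertices and $\binom{v-2}{2}$ is a constant while $m\to\infty$, I can reserve, for each unordered pair $\{i,j\}$, a fresh collection of $\binom{v-2}{2}$ hyperedges of $\cG$ lying in blocks $B_\ell$ with $\ell\notin\{i,j\}$ (there are plenty of blocks, and each block's $\delta$ hyperedges collectively cover all pairs of $C$ provided $\delta$ and the sets $A_s$ are chosen so that every edge of $K_{v-2}$ appears in some $A_s\cup B_\ell$; if one block's hyperedges do not cover all of $\binom{C}{2}$, spread the Berge-$K_{v-2}$ across several blocks). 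A cyclic/round-robin assignment of the $A_s$-patterns to blocks, as in Case II of Theorem~\ref{complmulti}, makes this bookkeeping clean.

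Finally, I would count: $\cG$ has $\delta m + (\text{edges inside }C)=O(n)$ hyperedges, and by Claim~\ref{extend} this suffices. The main obstacle is the simultaneous satisfaction of properties 2, 3, and 4 with a single choice of block size $b$ and of the $\delta$ sets $A_1,\dots,A_\delta\subset C$ per block: property 2 forces all $\delta$ hyperedges of a block to contain all of $B_i$, which constrains $|A_s|=r-b$; property 3 then demands $r-b\ge$ (enough room to embed $d\le v-2$ core vertices of $F'$ and realize $e(F')$ edges), pushing $b$ down; but $b$ must also be $\le r$ and ideally not too small so that $R$ is negligible. The inequalities $v\ge 7$ and $r\ge 6$ should be exactly what is needed to thread this needle — verifying that these bounds suffice, and handling the few edge cases where $F'$ is very small or has isolated vertices (recall $F$ has no isolated edge by the remark preceding the theorem, so $\delta\ge 1$), is the technical heart of the argument. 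I would organize the construction by first fixing $b$ as a function of $r$ (likely $b=r-c$ for a small constant $c$ like $v-d$ or similar), exhibiting one "master block pattern" — a list of $\delta$ subsets of an abstract $(v-2)$-set carrying the required Berge-$F'$ and Berge-$K_{v-2}$ data — and then tiling the $m$ blocks with rotated copies of this pattern.
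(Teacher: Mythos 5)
The high-level strategy (partition $V$ into $C\cup B_1\cup\dots\cup B_m\cup R$, populate $\cG$ with hyperedges of the form $A\cup B_i$, ensure degree $\delta$ on each $B_i$, round-robin for property 4) is the same as in the paper, but the proposal has several genuine gaps and one outright error, so it does not yet constitute a proof.

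First, you misdescribe $F'$: you say it ``has exactly $d=|N(u_1)\cap N(u_2)|$ vertices,'' but $F'$ is the bipartite-like graph on $\{u_1,u_2\}\cup N(u_1)\cup N(u_2)$ consisting of all edges at $u_1$ or $u_2$ except $(u_1,u_2)$; its vertex set has size $d_1+d_2-d$ (after accounting for $u_1,u_2\in N(u_2),N(u_1)$), and it has $d_1-1+\delta$ edges. Your subsequent sentence ``I would need $\delta$ to be at least $e(F')$'' is therefore not the right condition either; the actual split is that the $\delta$ hyperedges at $y$ must realize the $d_1-1$ edges at $u_1$ and the $\delta$ hyperedges at $y'$ must realize the $\delta$ edges at $u_2$, and $\delta\ge d_1-1$ follows from $d_1\le d_2$. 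You do eventually state this correctly, but the earlier wrong condition signals that the bookkeeping hasn't been pinned down.

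Second, you never commit to a value of $b$, which is where the whole construction lives or dies, and where the hypotheses $v\ge7$, $r\ge6$ enter. The paper splits on whether $r\le v-4$ or $r>v-4$. When $r\le v-4$ it sets $b=r-2$ (so $A$'s are pairs in $C$) and gets property 4 for free by throwing all $\binom{|C|}{r}$ sets into $\cG$; when $r>v-4$ it sets $b=r-(v-4)$ (so $|A|=v-4\ge3$, just big enough), uses a cyclic assignment for property 4, and further splits into $d>0$ and $d=0$. None of these case splits appear in your sketch, and it is not clear a single formula for $b$ works across the whole range of $r$.

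Third, the phrase ``one arranges the incidence pattern combinatorially'' hides the real technical step: you must exhibit a system of distinct representatives (Hall's condition) so that the $\delta$ hyperedges at $y$ inject into the required neighbours of $u_1$ in $C$ and the $\delta$ hyperedges at $y'$ inject into the required neighbours of $u_2$, simultaneously and compatibly. The paper verifies Hall's condition explicitly (using $v-2\ge5$, hence $d\ge3$ in the tight subcase), and this is where $v\ge7$ is actually used; your sketch defers exactly this step. As written, the proposal is a plausible outline that tracks the paper's approach, but the numerical conditions, the choice of $b$, the case analysis, and the SDR verification are all missing or incorrect, so the argument is incomplete.
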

\begin{proof}[Proof of Claim.]
We fix a set $D\subset C$ of size $d$.

\vskip 0.15truecm

\textsc{Case I. } $r\le v-4$

Then putting all $r$-subsets of $C$ into $\cG$ ensures item 4 of the $F$-good property. We set $b=r-2$, so all further sets will meet $C$ in $2$ vertices. Observe that $d_1-1-d+\delta-d\le v-2-d$ is equivalent to $2(d_1-1-d)+\delta-d_1+1\le v-2-d$. Let $D=\{x_1,x_2,\dots,x_d\}$, and $y_{1,1},y_{1,2},y_{2,1},y_{2,2},\dots,y_{d_1-1-d,1},y_{d_1-1-d,2}$, $z_1,z_2,\dots,z_{\delta-d_1+1}$ be distinct vertices in $C$. Note that $d_1-1\le \delta$ implies that $\delta-d_1+1$ is non-negative. Then let 
\[
\cG_0:=\{\{x_\ell,x_{\ell+1}\}:1\le \ell \le d\} \cup \{\{y_{\ell,1},y_{\ell,2}\}:1\le \ell \le d_1-1-d\} \cup \{\{z_\ell,z_{\ell+1}\}:1\le \ell \le \delta-d_1+1\},
\]
where addition is always modulo the underlying set, i.e., $\cG_0$ consists of two cycles and a matching.
Let us put all sets of the form $A\cup B_h$ with $A\in \cG_0$ and $1\le h \le m$ into $\cG$. Then items 1 and 2 of the $F$-good property are satisfied, thus we need to check item 3.

For any $1\le i<j\le m$, $D$ plays the role of $N(u_1)\cap N(u_2)$ and the hyperedges $B_i\cup \{x_\ell,x_{\ell+1}\}$, $B_j\cup \{x_\ell,x_{\ell+1}\}$
play the role of the edges connecting $u_1$, $u_2$ to vertices of $D$, respectively. Vertices $y_{\ell,1}$ play the role of vertices in $N(u_1)\setminus (N(u_2)\cup \{u_2\})$, while vertices $y_{\ell,2}$ with $\ell=1,2,\dots, d_1-1-d$ and $z_{\ell'}$ with $\ell'=1,2,\dots,\delta-d_1+1$ play the role of $N(u_2)\setminus (N(u_1)\cup \{u_1\})$. The use of hyperedges as edges is straightforward.

\vskip 0.15truecm

\textsc{Case II. } $r> v-4$

Then we set $b=r-(v-4)$ and thus every hyperedge meets $C$ in $c:=v-4=|C|-2$ vertices. Consequently, $|R|$ is the residue of $n-v+2$ modulo $b$. 
By $v\ge 7$, we obtain $c\ge 3$. Let $e_1,e_2,\dots,e_{\binom{v-2}{2}}$ be an enumeration of the edges of the complete graph on $C$. Then for any $1\le h\le m$, we will put a hyperedge of the form $A_{1,h}\cup B_h$ with $e_\alpha\subset A_{1,h}\subset C$ where $\alpha\equiv h ~(mod \binom{v-2}{2})$. As $n$ tends to infinity, so does $m$, and this will ensure item 4 of the $F$-good property. 

Suppose first $d>0$ and observe that $(d_1-1)+\delta\le v-2-d$, as $d_1-1$ is the size of $N(u_1)\setminus (N(u_2)\cup \{u_2\})$ and $\delta$ is the size of $N(u_2)\setminus (N(u_1)\cup \{u_1\})$. For any $1\le h \le m$, we define $A_{1,h},A_{2,h},\dots,A_{\delta,h}$ and put $A_{\ell,h}\cup B_h$ into $\cG$ for all $1\le \ell\le \delta$ as follows. Let $x_1,x_2,\dots,x_d$ be the elements of $D$, and $A_{1,h}$ be a $(v-4)$-element set containing $x_1$ and $e_\alpha$ (with $\alpha$ defined in the previous paragraph), and for $2\le \ell \le d$ let $A_{\ell,h}$ be an arbitrary $(v-4)$-element subset of $C$ containing $x_1,x_\ell$. (We need $v-4\ge 3$ to be able to make the choice of $A_{1,h}$.) Finally, let $A_{d+1,h},A_{d+2,h},\dots,A_{\delta,h}$ be distinct $(v-4)$-element subsets of $C\setminus \{x_1\}$. There are $v-3$ such subsets, each missing one element of $C\setminus\{x_1\}$. We take them one by one, starting with those that miss an element from $D\setminus \{x_1\}$.
The choice of $A_{1,h}$ verifies item 4 of the $F$-good property and items 1 and 2 hold by definition. 

To see item 3, let $1\le i<j\le m$. We need to create a copy of a Berge-$F'$. Vertices of $D$ will play the role of $N(u_1)\cap N(u_2)$, $A_{1,i}\cup B_i,A_{2,i}\cup B_i,\dots,A_{d,i}\cup B_i$ will play the role of the edges connecting $u_1$ to all the vertices of $D$ and similarly  $A_{1,j}\cup B_j,A_{2,j}\cup B_j,\dots,A_{d,j}\cup B_j$ will play the role of the edges connecting $u_2$ to all the vertices of $D$. To finish the Berge copy of $F'$ we will connect both $u_1$ and $u_2$ to all the vertices in $C\setminus D$ (thus in fact we present a Berge copy of $K_{2,v-2}$, which clearly contains $F'$). We will use the hyperedges $A_{i,d+1},A_{i,d+2},\dots,A_{i,d_1-1},A_{i,d+1}$ to connect $u_1$ to the vertices in $C\setminus D$. As they each contain $u_1$, it is enough to show an injection $f$ from $A_{i,d+1},A_{i,d+2},\dots,A_{i,d_1-1},A_{i,d+1},A_{j,d+1},A_{j,d+2},\dots,
\break A_{j,d_1-1},A_{j,d+1}$ to $C\setminus D$ such that $f(H) \in H$ for all sets.
All we need to check is whether Hall's condition holds: as for any two distinct sets, their union contains $C\setminus D$, the only problem can occur if $A_{i,\ell}\cap (C\setminus D)=A_{j,\ell'}\cap (C\setminus D)\neq C\setminus D$ and $|C\setminus D|=1$ or 2. But then by $v-2\ge 5$, we have $d\ge 3$ and thus all choices of $A_{i,\ell},A_{j,\ell}$ contain $C\setminus D$ by the assumption that we picked those such subsets first that miss another element of $D$ apart from $x_1$.

Suppose next $d=0$. Then for any $1\le h \le m$ let us fix $\pi_h$, a permutation $z_1,z_2,\dots,z_{v-2}$ of vertices of $C$ with $z_1,z_2$ being the endvertices of the edge $e_\alpha$. Now let $A_{1,h},A_{2,h},\dots, A_{\delta,h}$ be cyclic intervals of length $v-4$ of $\pi_h$ with $e_\alpha \subset A_{1,h}$. Then putting the sets of the form $A_{\ell,h}\cup B_h$ to $\cG$ will satisfy items 1 and 2 by definition, item 4 by the choice of $A_{1,h}$, and item~3 by a similar Hall-condition reasoning as in the case of $d>0$.
\end{proof}

Now we are ready to prove the theorem. If $\delta=0$, then $F$ contains an isolated edge, and we are done by Proposition \ref{isoedge}. Otherwise by Claim \ref{Fgood} there exists an $F$-good hypergraph $\cG$ with $O(n)$ hyperedges, and by Claim \ref{extend} any $F$-saturated extension of $\cG$ has a linear number of hyperedges.
\end{proof}

\section{Concluding remarks}

For any graph $F$, integer $r\ge 2$ and enumeration $\pi: G_1,G_2,\dots,G_{\binom{n}{r}}$ of $\binom{[n]}{r}$ we can define a greedy algorithm that outputs a Berge-$F$-saturated $r$-uniform hypergraph $\cG$ as follows: we let $\cG_0=\emptyset$, and then for any $i=1,2,\dots,\binom{n}{r}$ we let $\cG_i=\cG_{i-1}\cup \{G_i\}$ if $\cG_{i-1}\cup \{G_i\}$ is Berge-$F$-free, and $\cG_i=\cG_{i-1}$ otherwise. Clearly, $\cG_{\pi,r}=\cG_{\binom{n}{r}}$ is Berge-$F$-saturated.

\begin{proposition}\label{colex}
If for some graph\/ $F$ we have\/ $\delta(F)=\kappa(F)$, then the greedy algorithm with respect to the colex order outputs an\/ $F$-saturated graph\/ $G$ with a linear number of edges.
\end{proposition}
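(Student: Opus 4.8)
The plan is to run the greedy and show that, past a bounded initial segment, it inserts only boundedly many hyperedges in each colex layer. Write $d:=\delta(F)=\kappa(F)$ and $v:=|V(F)|$; we may assume $d\ge 1$, for otherwise $F$ has an isolated vertex and (for $n$ large) Berge-$F$-saturation coincides with Berge-$F_0$-saturation, where $F_0$ is $F$ with its isolated vertices deleted, so the bound follows from Proposition~\ref{isoedge} or a short variant of it. The reduction is: if constants $M=M(F,r)$ and $c=c(F,r)$ exist such that for every $j>M$ at most $c$ hyperedges of $\cG=\cG_{\pi,r}$ have largest element $j$, then $|E(\cG)|\le \binom{M}{r}+c(n-M)=O(n)$, since at most $\binom{M}{r}$ hyperedges are contained in $[M]$.

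To set up the structure, recall that in colex order all $r$-subsets of an initial segment $[N]$ are processed before any $r$-set containing a larger element. Hence the greedy first inserts all $r$-subsets of $[v-1]$ (a Berge-$F$ needs at least $v$ vertices), and then keeps inserting $r$-subsets of growing $[N]$ until doing so would create a Berge-$F$; that happens while $N$ is still bounded, because a complete $r$-graph on boundedly many (but enough) vertices already contains a Berge-$F$ by a Hall/SDR argument applied to the $e(F)$ pairs of $F$, each sitting in $\binom{N-2}{r-2}$ hyperedges. Fix a vertex $w$ of $F$ of degree $d$, put $F^-:=F-w$ and $N_F(w)=\{w_1,\dots,w_d\}$. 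I would show that from this point on the greedy maintains the following configuration: a bounded ``hub'' $H\subseteq[N]$ together with, for each block of boundedly many consecutive later vertices, a bounded ``gadget'' glued to $H$, so that on $H$ plus any one gadget one reads off a Berge copy of $F^-$ that is \emph{flexible} --- every pair of its $v-1$ core vertices lies in unboundedly many (as $n\to\infty$) spare hyperedges, so the copy can be rebuilt avoiding any prescribed bounded set of hyperedges --- and such that every vertex outside $H$ lies in only a bounded number of hyperedges, each meeting $F$ in at most $d-1$ places. Crucially this configuration is Berge-$F$-free: a vertex outside $H$ that served as a vertex of a Berge-$F$ would have to lie in $\deg_F(\cdot)\ge d$ of its hyperedges, which is impossible. (The case $r\ge v$ is different: the Berge-$F^-$ cannot fit on $v-1$ vertices, padding outside the core must be used, and freeness is re-checked with this padding in mind.)

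Given the configuration, the key point is that in each later layer $j$ the greedy inserts at most $c$ hyperedges. Indeed, if an $r$-set $e$ with largest element $j$ is \emph{not} of a bounded family of ``gadget-extending'' shapes, then $\cG$-so-far together with $e$ already contains a Berge-$F$, so $e$ is rejected. The shapes in which $e$ completes a Berge-$F$ are of three types: (a) $e$ meets an existing gadget in a pair $\{x,y\}$ that the Berge-$F^-$ on that gadget uses, and flexibility lets us rebuild the Berge-$F^-$ avoiding $e$ and map the remaining $F$-edge to $e$; (b) $e$ is a ``$d$-th'' hyperedge at some outside vertex $z$ along a pair not in $z$'s gadget, so that the $d$ hyperedges at $z$ realise a $d$-set onto which $N_F(w)$ can be matched while $F^-$ re-embeds on the hub-plus-gadget around it, yielding a Berge-$F$ with $w\mapsto z$; (c) $e$ is ``horizontal'', meeting two outside vertices $z,z'$, and then the hub together with the gadgets of $z$ and $z'$ already carries a Berge copy of $F$ missing only the edge to be mapped to $e$. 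In (b) and (c) one uses that $F$ is $d$-connected: after committing the one or two ``outside'' slots to $z$ (and $z'$), the remaining vertices of $F$ still embed faithfully onto the hub-plus-gadget and the $d-1$ pre-committed hyperedges at each outside vertex can be completed into a consistent embedding of $N_F(w)$. That $\kappa(F)=\delta(F)$ is needed precisely to build the flexible kernels and to carry out these re-embeddings is, I believe, the crux of the matter.

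The main obstacle is the structural/inductive step of the second paragraph: showing that the colex greedy really does build and maintain such a configuration, with all constants uniform in $n$. This should reduce to a finite, order-sensitive bookkeeping of what the greedy adds in the layers immediately after $[v-1]$ --- in effect, that it keeps assembling one bounded gadget per bounded block of new vertices and rejects everything else. Besides the case $r\ge v$ already noted, the case $d=1$ needs separate treatment: then there are no pre-committed hyperedges at outside vertices, the ``tail'' argument degenerates, and one instead argues directly that the greedy output is everywhere locally saturated, exactly as for Berge-stars and Berge-trees (cf.\ the star argument in the proof of Theorem~\ref{complmulti}).
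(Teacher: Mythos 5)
Your proposal addresses a different (and much harder) problem than the one actually stated. Proposition~\ref{colex} is about the case $r=2$: the greedy is run on \emph{pairs} and outputs an ordinary $F$-saturated \emph{graph} $G$ with $O(n)$ \emph{edges} (note the wording ``graph $G$'' and ``edges'' in the statement, and the Concluding Remarks, which discuss $G(k,1/2)$ and ``$F$-saturated graph''). Your entire argument is phrased for general $r$-uniform hypergraphs and Berge copies --- ``hyperedges in each colex layer,'' ``the case $r\ge v$,'' gadgets glued to a hub, flexible Berge-$F^-$ copies, and so on --- so you are attempting a result the paper does not claim here. That alone makes the proposal miss the target.

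The paper's actual proof is short and explicit: in the colex order the greedy first builds $K_{v(F)-1}$ on $[v(F)-1]$; every later vertex picks up edges to $[\delta(F)-1]$ (safe, since a vertex of degree $<\delta(F)$ cannot lie in a copy of $F$), and beyond that the later vertices assemble into vertex-disjoint cliques of size $v(F)-\delta(F)$. Hence $G$ is $\delta(F)-1$ universal vertices plus disjoint $(v(F)-\delta(F))$-cliques, which clearly has $O(n)$ edges. The hypothesis $\kappa(F)=\delta(F)$ is used to verify that the greedy can indeed build each new $(v(F)-\delta(F))$-clique without creating a copy of $F$: any embedding of $F$ into this structure would meet at least two cliques, and deleting the $\delta(F)-1$ universal vertices would then disconnect the image, contradicting $\kappa(F)=\delta(F)$. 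Nothing resembling this appears in your write-up.

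Finally, even taken on its own terms your proposal is not a proof: you explicitly flag the inductive maintenance of the ``hub-plus-gadgets'' configuration as an unresolved ``main obstacle,'' defer the $r\ge v$ and $d=1$ cases, and say only that you ``believe'' $\kappa(F)=\delta(F)$ is what makes the re-embeddings work. Those are precisely the places where the argument would need to be carried out, so there is a genuine gap independent of the mismatch in setting.
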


\begin{proof}
Observe that no matter what $F$ is, the greedy algorithm starts by creating a clique on $[v(F)-1]$ and later every vertex $u \in [n]\setminus [v(F)-1]$ will be connected to all vertices $u' \in [\delta(F)-1]$. Indeed, a new vertex cannot help creating a copy of $F$ without having degree at least $\delta(F)$. We claim that the graph $G$ will consist of $\delta(F)-1$ universal vertices, $\lfloor \frac{n-\delta(F)+1}{v(F)-\delta(F)}\rfloor$ cliques of size $v(F)-\delta(F)$, and one possible extra clique of size $s$, where $s$ is the residue of $n-\delta(F)+1$ modulo $v(F)-\delta(F)$. The universal vertices with any clique of size $v(F)-\delta(F)$ form a clique of size $v(F)-1$, so any new vertex is joined to the universal vertices, and the next $v(F)-\delta(F)$ vertices can form a clique because of the condition $\delta(F)=\kappa(F)$.
\end{proof}

Observe that the Erd\H os-R\'enyi random graph $G(k,1/2)$ satisfies the condition of Proposition \ref{colex} with probability tending to 1 as $k$ tends to infinity. So, for almost all graphs $F$, the greedy algorithm outputs  an $F$-saturated graph $G$ with a linear number of edges.

\begin{proposition}
Suppose\/ $F$ is connected and contains a cut-edge\/ $u,w$. Then\/ $r\ge e(F)$ implies\/ $sat_r(n,F)=O(n)$.
\end{proposition}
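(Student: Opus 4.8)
The plan is to build, for large $n$, a Berge-$F$-free $r$-graph $\cG$ on $n$ vertices with $O(n)$ edges in which every missing $r$-set, once added, creates a Berge-$F$ in which that set plays the cut-edge $uw$. Write $v=|V(F)|$ and $e=e(F)$; connectedness gives $e\ge v-1$. Deleting $uw$ splits $F$ into connected pieces $F_u\ni u$ and $F_w\ni w$; set $F_u^-=F_u-u$, $F_w^-=F_w-w$, and let $d=d_F(u)-1$ and $d'=d_F(w)-1$ count the edges from $u$ into $F_u^-$ and from $w$ into $F_w^-$, so $e=e(F_u^-)+e(F_w^-)+d+d'+1$. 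If $u$ or $w$ is a leaf of $F$ the corresponding side degenerates (and $F=K_2$ is covered by Proposition~\ref{isoedge}); so I focus on the main case $d,d'\ge 1$.

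\emph{The construction.} Fix disjoint sets $C_u$ of $|V(F_u^-)|$ vertices and $C_w$ of $|V(F_w^-)|$ vertices, and partition the rest into \emph{pendant gadgets}, each a \emph{pendant vertex} $x$ with a \emph{private} set $S_x$ of $r-2$ \emph{satellites}. Put into $\cG$: a fixed Berge copy of $F_u^-$ with core $C_u$ and a fixed Berge copy of $F_w^-$ with core $C_w$ (their hyperedges padded to size $r$ using $O(1)$ reserved satellites); and, for each pendant vertex $x$, the $d$ hyperedges $\{x,c\}\cup S_x$ as $c$ runs over the images in $C_u$ of $N_{F_u}(u)$ --- chosen so that with the fixed Berge-$F_u^-$ they form a Berge copy of $F_u$ in which $x$ plays $u$ --- together with the symmetric $d'$ hyperedges $\{x,c'\}\cup S_x$ giving a Berge copy of $F_w$ in which $x$ plays $w$. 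Then $|\cG|=O(1)+O(n)(d+d')=O(n)$. Two structural features will be used: no hyperedge meets two distinct gadgets, and no hyperedge contains both a vertex of $C_u$ and a vertex of $C_w$ (each per-pendant hyperedge carries a single hub vertex); also, within a gadget the pendant vertex and all its satellites lie in exactly the same hyperedges, so any of them can serve as $u$ or as $w$.

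\emph{Saturation.} If a missing $r$-set $G$ contains two vertices $x',y'$ lying in two distinct non-reserved gadgets, let $G$ realize $uw$ with $x'\mapsto u$, $y'\mapsto w$, realize $F_u$ from the fixed Berge-$F_u^-$ plus $x'$'s gadget's $d$ hyperedges, and $F_w$ from the fixed Berge-$F_w^-$ plus $y'$'s gadget's $d'$ hyperedges; the two features and $G\notin\cG$ force all $e$ hyperedges and the core vertices to be distinct, so this is a Berge-$F$. Equipping the reserved satellites with a ``spare'' $u$-structure and $w$-structure (added to $\cG$ but with no hyperedge joining them, so no Berge-$F$ is created) makes the same recipe work whenever $G$ has two vertices outside $C_u\cup C_w$. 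Since $r\ge v-1=|C_u\cup C_w|+1$, a missing $r$-set can fail this only when $r=v-1$ (forcing $e=v-1$, i.e.\ $F$ a tree), and then it must have the form $C_u\cup C_w\cup\{z\}$; such sets I would handle by a separate elementary argument, which I expect to be routine but not automatic.

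The step I expect to be the main obstacle is Berge-$F$-freeness. In any Berge-$F$ living in $\cG$, the cut-edge $uw$ is realized by one hyperedge containing $\phi(u)$ and $\phi(w)$; but $\phi(u)$ has degree $d+1$ in the copy, hence is incident there to $d$ hyperedges reaching the images of $N_{F_u}(u)$, which lie in $C_u$, and $\phi(w)$ must likewise reach $C_w$ --- while no hyperedge of $\cG$ meets both $C_u$ and $C_w$. So the cut-edge cannot be realized at all, and $\cG$ is Berge-$F$-free. Making this rigorous requires: first, ruling out that a satellite or a hub vertex ``impersonates'' $u$ or $w$ --- here the privacy of the satellite sets is essential, since any copy routing $F$ through satellites is forced to reuse a hyperedge (distinct gadgets share no satellite, and each gadget supplies only $d+d'$ hyperedges); second, coping with the possibly disconnected or edgeless structure of $F_u^-$ and $F_w^-$. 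I expect this second point, along with the low-uniformity boundary case above, to absorb most of the work.
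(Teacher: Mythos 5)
Your construction is not Berge-$F$-free, and the step you flag as ``the main obstacle'' is in fact a genuine failure, not just an unfinished detail. The freeness argument asserts that in any Berge-$F$ in $\cG$ the images of $N_{F_u}(u)$ must lie in $C_u$; but nothing forces this, because $\phi(u)$ can itself be a hub vertex in $C_u$, with its core neighbours scattered across pendant vertices of \emph{other} gadgets. A concrete counterexample: take $F=P_4$ with vertices $a\!-\!u\!-\!w\!-\!b$ and cut-edge $uw$. Then $F_u^-=\{a\}$, $F_w^-=\{b\}$, $d=d'=1$, $C_u=\{a'\}$, $C_w=\{b'\}$, and each pendant $x$ contributes exactly the two hyperedges $\{x,a'\}\cup S_x$ and $\{x,b'\}\cup S_x$. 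For any two distinct pendants $x,y$, the three distinct hyperedges $\{y,a'\}\cup S_y$, $\{x,a'\}\cup S_x$, $\{x,b'\}\cup S_x$ form a Berge-$P_4$ with core $y,a',x,b'$ already inside $\cG$. The ``privacy'' of the satellites gives no protection here: this copy uses no satellites, and a single gadget already supplies enough distinct hyperedges ($d+d'\ge 2$) to turn the corner at the hub. Since $P_4$ is neither complete multipartite nor of type I, it is not handled by the paper's other theorems, so this case cannot be dismissed.

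The root cause is that $C_u$ and $C_w$ are global hubs shared by all gadgets, so the gadgets are not isolated from one another and a Berge-$F$ can straddle two of them without ever using the ``new'' hyperedge. The paper's proof avoids this by using genuinely disjoint blocks: it partitions $[n]$ into sets $B_1,\dots,B_m$ of size $r+1$ (plus a leftover $B_0$), places $e(F)-1$ of the $r$-subsets of $B_i$ into $\cG_0$ for each $i$, and notes that $\cG_0$ is Berge-$F$-free simply because each component has only $e(F)-1$ hyperedges while a connected Berge-$F$ needs $e(F)$. An $r$-set meeting two blocks then plays the cut-edge, and Hall's theorem inside each block (using that each hyperedge of $\cG_{0,i}$ is $B_i$ minus one vertex) produces Berge copies of $F_u$ and $F_w$; the only delicate case, $F_u$ a star with $e(F)-1$ leaves, is handled by re-rooting the star at a vertex lying in every hyperedge of $\cG_{0,i}$. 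To repair your approach you would need the gadgets to be self-contained rather than sharing $C_u,C_w$, at which point you would essentially be reinventing the paper's block construction.
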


\begin{proof}
It is known that the saturation number of stars is linear, so we can assume that $F$ is not a star. Let $F_u$ denote the component of $u$ in $F\setminus \{(u,w)\}$ and $F_w$ denote the component of $w$ in $F\setminus \{(u,w)\}$. Let us consider a partition $B_0,B_1,B_2,\dots, B_m$ of an $n$-element set $U$ with $|B_1|=|B_2|=\dots =|B_m|=r+1$ and $|B_0|\le r$. For $i=1,2,\dots,m$, let $\cG_{0,i}$ consist of $e(F)-1$ $r$-subsets of $B_i$ and let $\cG_0=\cup_{i=1}^m\cG_{0,i}$. Clearly, $\cG_0$ is Berge-$F$-free as its components contain $e(F)-1$ hyperedges. We claim that if $G$ contains vertices $u_i \in B_i$ and $u_j\in B_j$ with $1\le i< j\le m$, then $\{G\}\cup \cG_0$ contains a Berge-$F$. Indeed, $G$ can play the role of the cut-edge with $u_i$ and $u_j$ as its two endpoints. We need to show that $\cG_{0,i}$ contains a Berge-$F_u$ with $u$ played by $u_i$. (The proof for $\cG_{0,i}$ containing a Berge-$F_w$ with $w$ played by $u_j$ is identical.) The graph $F_u$ contains at most $e(F)-1$ edges. We need to verify Hall's condition in the auxiliary bipartite graph $B$ with one part the edges of $F_u$ and the other part the hyperedges of $\cG_{0,i}$ and an edge $e$ is connected to a hyperedge $G$ if and only if $e\subset G$. Note that the degree of any edge $e$ is at least $e(F_u)-2$ and if $e_1$ and $e_2$ are disjoint edges of $F_u$, then in $B$ their neighborhood is $\cG_{0,i}$. Therefore the only problem that can occur is if $F_u$ is a star with $e(F)-1$ leaves. If the center of $F_u$ is $u$, then $F$ is also a star, contradicting our assumption. If the center $c$ of $F_u$ is not $u$, then a vertex $u'\in B_i$ that is contained in all hyperedges of $\cG_{0,i}$ can play the role of $c$.
\end{proof}

\section*{Acknowledgement}

Research was supported by the National Research, Development and Innovation Office - NKFIH under the grants FK 132060, KH130371, KKP-133819 and SNN 129364. Research of  Vizer was supported by the J\'anos Bolyai Research Fellowship. and by the New National Excellence Program under the grant number \'UNKP-20-5-BME-45.
Research of Patk\'os was supported by the Ministry of Educational and Science of the Russian Federation in the framework of MegaGrant no. 075-15-2019-1926.

\end{document}